\documentclass[a4paper,reqno]{amsart}

\textheight 220mm
\textwidth 150mm
\hoffset -16mm
\usepackage{amssymb}
\usepackage{amstext}
\usepackage{amsmath}
\usepackage{amscd}
\usepackage{amsthm}
\usepackage{amsfonts}
\usepackage{enumerate}
\usepackage{graphicx}
\usepackage{latexsym}
\usepackage{mathrsfs}
\usepackage{mathtools}
\usepackage[all]{xy}
\xyoption{all}

\usepackage{pstricks}
\usepackage{lscape}
\usepackage{comment}

\newtheorem{theorem}{Theorem}[section]

\newtheorem{lemma}[theorem]{Lemma}
\newtheorem{proposition}[theorem]{Proposition}
\newtheorem{definition-proposition}[theorem]{Definition-Proposition}

\newtheorem{question}[theorem]{Question}

\theoremstyle{definition}
\newtheorem{definition}[theorem]{Definition}

\newtheorem{example}[theorem]{Example}

\newcommand{\Ext}{\operatorname{Ext}\nolimits}

\newcommand{\Hom}{\operatorname{Hom}\nolimits}

\renewcommand{\mod}{\mathsf{mod}\hspace{.01in}}

\newcommand{\add}{\mathsf{add}\hspace{.01in}}
\newcommand{\Fac}{\mathsf{Fac}\hspace{.01in}}

\newcommand{\RHom}{\mathbf{R}\strut\kern-.2em\operatorname{Hom}\nolimits}

\newcommand{\Sub}{\mathsf{Sub}\hspace{.01in}}

\numberwithin{equation}{section}

\usepackage{paralist}

\hoffset-9mm

\def\Ker{\mathop{\rm Ker}\nolimits}

\def\pd{\mathop{\rm pd}\nolimits}

\begin{document}
\title{Self-orthogonal $\tau$-tilting modules and tilting modules}
\thanks{2000 Mathematics Subject Classification: 16G10, 16E10.}
\thanks{Keywords: $\tau$-tilting module, tilting module, self-orthogonal module}
\thanks{The author is supported by NSFC(No.11671174), the Project Funded by the Priority
Academic Program Development of Jiangsu Higher Education Institutions and the Starting Funding of Jiangsu Normal University }


\author{Xiaojin Zhang}
\address{X. Zhang: { School of Mathematics and Statistics, Jiangsu Normal University, Xuzhou, 221116, P. R. China}} \email{xjzhang@jsnu.edu.cn, xjzhangmaths@163.com}

\maketitle
\begin{abstract}
Let $\Lambda $ be an artin algebra and $T$ a $\tau$-tilting $\Lambda$-module. We prove that $T$ is a tilting module if and only if ${\rm Ext}_{\Lambda}^{i}(T,\Fac T)=0$ for all $i\geq 1$, where $\Fac T$ is the full subcategory consisting of modules generated by $T$. Consequently, a $\tau$-tilting module $T$ of finite projective dimension is a tilting module if and only if ${\rm Ext}_{\Lambda}^{i}(T, T)=0$ for all $i\geq 1$. Moreover, we also give an example to show that a support $\tau$-tilting but not $\tau$-tilting module $M$ of finite projective dimension satisfying ${\rm Ext}_{\Lambda}^{i}(M, M)=0$ for all $i\geq1$ need not be a partial tilting module.
\end{abstract}

\section{Introduction}

Tilting modules \cite{HaR} have been central in the representation theory of finite dimensional algebras since 1970s. They are also useful in both  commutative algebra \cite{I} and algebraic geometry \cite{Hu}. By a tilting module, we mean a tilting module of projective dimension at most one. There are two different generalizations of tilting modules. One is the notion of tilting modules of finite projective dimension introduced by Miyashita \cite{Mi} and Happel independently. The other is the notion of $\tau$-tilting modules introduced by Adachi, Iyama and Reiten \cite{AIR} in 2014. So it is interesting to study the intersection between tilting modules of finite projective dimension and $\tau$-tilting modules.

It was shown in \cite[Proposition 2.2(b)]{AIR} that tilting modules are precisely faithful support $\tau$-tilting modules. Moreover, by using the Auslander-Reiten formula, one can show that tilting modules are precisely the $\tau$-tilting modules of projective dimension at most one. The comparison of tilting modules and $\tau$-tilting modules attracted a lot of attention (see \cite{BZ1, BZ2, BZ3, IZ1, IZ2, XZZ, Z}).

In this paper, we compare tilting modules and $\tau$-tilting modules from a new viewpoint. On one hand, a tilting $\Lambda$-module $T$ is always self-orthogonal, that is, ${\rm Ext}_{\Lambda}^{i}(T,T)=0$ for all $i\geq 1$. On the other hand, Xie, Zan and the author \cite{XZZ} studied the homological properties of self-orthogonal $\tau$-tilting modules and proved that self-orthogonal $\tau$-tilting modules admit very similar properties to tilting modules. So it is interesting to study how far self-orthogonal $\tau$-tilting modules are from tilting modules. A natural question is:

\begin{question}\label{0.a} Is a self-orthogonal $\tau$-tilting module a tilting module?
\end{question}

 Recall from \cite{AuS} that a module $M$ is $\tau$-rigid if and only if ${\rm Ext}_{\Lambda}^{1}(M,{\rm Fac}M)=0$, where ${\rm Fac}M$ is the full subcategory of $\mod\Lambda$ consisting of modules generated by $M$. It is natural to consider the modules satisfying ${\rm Ext}_{\Lambda}^{i}(M,{\rm Fac}M)=0$ for all $i\geq 1$. By studying the modules satisfying ${\rm Ext}_{\Lambda}^{i}(M,{\rm Fac}M)=0$ for all $i\geq 1$, we prove the following main result in this paper.
\begin{theorem}\label{0.1} Let $\Lambda$ be an artin algebra and $T\in\mod\Lambda$. Then $T$ is a tilting module if and only if $|T|=|\Lambda|$ and ${\rm Ext}_{\Lambda}^{i}(T,\Fac T)=0$ for all $i\geq1$.
\end{theorem}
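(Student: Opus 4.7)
The forward direction is immediate. A tilting module $T$ is a $\tau$-tilting module, so $|T|=|\Lambda|$; combined with $\projdim T\le 1$ and the Brenner--Butler identity $\Fac T=T^{\perp_1}$, this gives $\Ext^i(T,\Fac T)=0$ for all $i\ge 1$.

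For the converse, assume $|T|=|\Lambda|$ and $\Ext^i(T,\Fac T)=0$ for all $i\ge 1$. Since $\Ext^1(T,\Fac T)=0$ is equivalent to $T$ being $\tau$-rigid (Auslander--Smal\o), the assumption $|T|=|\Lambda|$ together with \cite{AIR} makes $T$ a $\tau$-tilting module; taking $X=T\in\Fac T$ also shows $T$ is self-orthogonal. Recall that a $\tau$-tilting module is tilting if and only if it is faithful, if and only if $\Lambda\in\Sub T$ (\cite[Proposition~2.2]{AIR}); hence the goal reduces to constructing a Bongartz-type short exact sequence $0\to\Lambda\to T^0\to Z\to 0$ with $T^0,Z\in\add T$.

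My plan is to start from a finite $\add T$-coresolution of $\Lambda$
\[
0\to \Lambda\to T^0 \to T^1 \to \cdots \to T^n\to 0
\]
with each $T^i\in\add T$, which the results of \cite{XZZ} on self-orthogonal $\tau$-tilting modules should supply. Let $Z^i=\Im(T^i\to T^{i+1})$, with $Z^{-1}=\Lambda$ and $Z^{n-1}=T^n$, so that the coresolution decomposes into short exact sequences $0\to Z^{i-1}\to T^i\to Z^i\to 0$; each $Z^i$ ($i\ge 0$) is a quotient of $T^i$ and so lies in $\Fac T$. Since $\Ext^j(T^i,X)=0$ for every $X\in\Fac T$ and $j\ge 1$ by hypothesis, dimension shifting along these short exact sequences yields
\[
\Ext^1(Z^0,X)\cong \Ext^2(Z^1,X)\cong\cdots\cong \Ext^n(T^n,X)=0
\]
for all $X\in\Fac T$. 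Thus $Z^0$ is Ext-projective in $\Fac T$, which by the AIR characterization of $\add T$ as the Ext-projectives of the torsion class $\Fac T$ forces $Z^0\in\add T$. Truncating gives the desired sequence $0\to\Lambda\to T^0\to Z^0\to 0$, so $\Lambda\in\Sub T$ and $T$ is tilting.

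The main obstacle lies in the very first step, i.e.\ producing the finite $\add T$-coresolution of $\Lambda$ from self-orthogonality and the $\tau$-tilting property. Once this is in hand, the dimension-shift and Ext-projectivity argument is routine. If \cite{XZZ} does not directly furnish the coresolution, the fall-back is to iterate left $\add T$-approximations beginning with the one afforded by $\tau$-tilting and to use the higher Ext vanishing to control the successive cokernels and argue termination.
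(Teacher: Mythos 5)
There is a genuine gap, and it sits exactly where you flagged it: the finite $\add T$-coresolution $0\to\Lambda\to T^0\to\cdots\to T^n\to 0$ is not available, and neither \cite{XZZ} nor any general fact about self-orthogonal $\tau$-tilting modules supplies it. The problem already occurs at the first step: a left $\add T$-approximation $f\colon\Lambda\to T^0$ has kernel equal to the annihilator of $T$, so $f$ is injective if and only if $T$ is faithful --- and by \cite[Proposition 2.2(b)]{AIR} faithfulness of a $\tau$-tilting module is precisely the tilting property you are trying to establish. The same objection defeats the fall-back of iterating left approximations: the sequence afforded by $\tau$-tilting theory (from the 2-term silting presentation) is $\Lambda\xrightarrow{f}T^0\to T^1\to 0$ with $f$ not injective in general, so there is no coresolution to truncate. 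In short, your reduction to ``$\Lambda\in\Sub T$'' is correct and the dimension-shifting/Ext-projectivity argument would be fine if the coresolution existed, but assuming the coresolution is essentially assuming the conclusion. (The forward direction and the reduction of the converse to the $\tau$-tilting case via Auslander--Smal\o{} are fine.)

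The paper closes this gap by an entirely different mechanism. It uses the characterization that $T$ is tilting if and only if $\Fac T=T^{\bot}$, and proves the missing inclusion $T^{\bot}\subseteq\Fac T$ by induction on the composition length of $M\in T^{\bot}$. The two ingredients are: (i) a nonvanishing lemma (Proposition \ref{2.5}) saying that a nonzero $M$ with $\Ext^1_\Lambda(T,M)=0$ must satisfy $\Hom_\Lambda(T,M)\neq0$, proved via the torsion pair $(\Fac T,\Sub\tau T)$ and the Auslander--Reiten formula; and (ii) the canonical sequence $0\to T(M)\to M\to F(M)\to0$ of that torsion pair, which shows $F(M)\in T^{\bot}$ has strictly smaller length, hence lies in $\Fac T\cap\Sub\tau T=0$. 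If you want to salvage your argument, you would need to first prove faithfulness by some such torsion-theoretic argument --- at which point the coresolution construction becomes unnecessary.
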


 Applying Theorem 1.2 to $\tau$-tilting modules of finite projective dimension, we give a positive answer to Question \ref{0.a} as follows:
 \begin{theorem}\label{0.2} (Theorem \ref{2.8}) Let $\Lambda$ be an artin algebra and $T\in\mod\Lambda$ a $\tau$-tilting module of finite projective dimension. Then $T$ is a tilting module if and only if ${\rm Ext}_{\Lambda}^{i}(T,T)=0$ for all $i\geq1$.
 \end{theorem}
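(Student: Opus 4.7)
The plan is to deduce this from Theorem \ref{0.1}. Since $T$ is a $\tau$-tilting module we automatically have $|T|=|\Lambda|$, so it suffices to prove that, under the standing hypothesis $\pd T<\infty$,
\[ \Ext_{\Lambda}^{i}(T,T)=0 \text{ for all } i\ge 1 \ \Longleftrightarrow\ \Ext_{\Lambda}^{i}(T,\Fac T)=0 \text{ for all } i\ge 1. \]
The direction $(\Leftarrow)$ is trivial because $T\in\Fac T$, and for $(\Rightarrow)$ the case $i=1$ is already guaranteed by the $\tau$-rigidity of $T$ via the Auslander-Reiten formula. The entire burden is the case $i\ge 2$.

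For $i\ge 2$, fix $X\in\Fac T$. The key tool I will invoke from $\tau$-tilting theory \cite{AIR} is that every object of $\Fac T$ admits a \emph{$2$-term $\add T$-presentation}: an exact sequence $T_{1}\to T_{0}\to X\to 0$ with $T_{0},T_{1}\in\add T$. Writing $K_{1}:=\Im(T_{1}\to T_{0})=\ker(T_{0}\to X)$, the module $K_{1}$ is a quotient of $T_{1}\in\add T$, hence $K_{1}\in\Fac T$. Applying the same fact to $K_{1}$ and iterating produces an $\add T$-resolution
\[ \cdots \to T_{2}\to T_{1}\to T_{0}\to X\to 0 \]
all of whose syzygies $K_{m}$ (with $K_{0}:=X$) lie in $\Fac T$ and all of whose terms $T_{m}$ lie in $\add T$.

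I now splice this into short exact sequences $0\to K_{m+1}\to T_{m}\to K_{m}\to 0$ and apply $\Hom_{\Lambda}(T,-)$. Because $T_{m}\in\add T$ and $\Ext_{\Lambda}^{j}(T,T)=0$ for all $j\ge 1$, the long exact sequence collapses to a connecting isomorphism at each stage, so by dimension shifting
\[ \Ext_{\Lambda}^{i}(T,X)\ \cong\ \Ext_{\Lambda}^{i+1}(T,K_{1})\ \cong\ \cdots\ \cong\ \Ext_{\Lambda}^{i+m}(T,K_{m}) \]
for every $m\ge 0$. Finally, since $\pd T<\infty$, choosing $m$ with $i+m>\pd T$ forces $\Ext_{\Lambda}^{i+m}(T,K_{m})=0$, so $\Ext_{\Lambda}^{i}(T,X)=0$, as required; Theorem \ref{0.1} then concludes that $T$ is tilting.

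The main obstacle is producing the $\add T$-resolution whose syzygies remain in $\Fac T$; this is where the full $\tau$-tilting hypothesis (not merely $\tau$-rigidity) is used, through the $2$-term $\add T$-presentation fact. The hypothesis $\pd T<\infty$ enters only at the last step, to terminate the dimension shift — a necessity reinforced by the counterexample announced in the abstract, where dropping either of the two conditions ($\tau$-tilting rather than just support $\tau$-tilting, or finite projective dimension) can cause the conclusion to fail.
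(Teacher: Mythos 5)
Your proposal is correct and follows essentially the same route as the paper: the ``$2$-term $\add T$-presentation with syzygies in $\Fac T$'' fact you invoke is exactly the paper's Proposition \ref{2.3} (proved there via minimal right $\add T$-approximations, Lemma \ref{2.b}, and the torsion pair $(\Fac T,\Sub\tau T)$), and your dimension-shifting argument using $\pd_\Lambda T<\infty$ is the paper's Lemma \ref{2.7}, after which both arguments conclude by Theorem \ref{2.6}.
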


In particular, if $\Lambda$ is an artin algebra of finite global dimension, then every self-orthogonal $\tau$-tilting module is a tilting module.

The paper is organized as follows: In Section 2, we recall some preliminaries. In Section 3, we show Theorems \ref{0.1} and \ref{0.2}. Moreover, we give an example to show that a self-orthogonal support $\tau$-tilting but not $\tau$-tilting module of finite projective dimension need not be a partial tilting module.

Throughout this paper, all algebras are artin $R$-algebras, where $R$ is a commutative artin ring.
For an algebra $\Lambda$, we use $\mathrm{mod}\Lambda$ to denote the category of finitely
 generated right $\Lambda$-modules. Denote by $\mathbb{D}=\mathrm{Hom}_{R} (-,E(R/J(R)))$ the standard duality, where $J(R)$ is the Jocobson radical of $R$ and $E(R/J(R))$ is the injective envelope of $R/J(R)$. Denote by $\tau$ the Auslander-Reiten translation functor.

\section{Preliminaries}

In this section, we recall some basic properties of tilting modules and $\tau$-tilting modules.

For an algebra $\Lambda$ and a module $M\in \mod\Lambda$, denote by $|M|$ the number of non-isomorphic indecomposable direct summands of $M$ and denote by $\pd_{\Lambda}M$ the projective dimension of $M$.

\begin{definition}\label{2.1} Let $T\in\mod\Lambda$. $T$ is called a {\it tilting} module if the following are satisfied:
\begin{enumerate}[\rm(1)]
\item $\pd_{\Lambda}T\leq 1$,
\item $\Ext_{\Lambda}^{i}(T,T)=0$ for all $i\geq 1$,
\item There is an exact sequence $0\rightarrow \Lambda\rightarrow T_0\rightarrow T_1\rightarrow0$ with $T_i\in \add T$.
\end{enumerate}
We should remark that (3) is equivalent to $|T|=|\Lambda|$. Moreover, $T$ is called a {\it partial tilting module} if $T$ satisfies (1) and (2).
\end{definition}

We also need the following definitions in \cite{S} and \cite{AIR}.

\begin{definition}\label{2.2}
 \begin{enumerate}[\rm(1)]
\item We call $M\in \mod\Lambda$ {\it $\tau$-rigid} if $\mathrm{Hom}_{\Lambda}(M,\tau M)=0$.

 \item $M \in
\mathrm{mod}\Lambda$ is called {\it $\tau$-tilting} if $M$ is $\tau$-rigid
and $|M|=|\Lambda|$.
\item $M$ is support $\tau$-tilting if $T$ is $\tau$-tilting over the algebra $\Lambda/(e)$, where $e$ is an idempotent of $\Lambda$.
\end{enumerate}
\end{definition}

The following lemma from \cite[Lemma 2.6]{AIR} is essential in this paper.

\begin{lemma}\label{2.b}Let $0\rightarrow Y\rightarrow T_0\stackrel{f}{\rightarrow} X$ be an exact sequence in $\mod\Lambda$, where $T$ is $\tau$-rigid, and $f:T_0\rightarrow X$ is a minimal right $\add T$-approximation of $X$. Then $\Hom_{\Lambda}(Y,\tau T)=0$.
\end{lemma}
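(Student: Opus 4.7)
The plan is to combine the description of $\tau T$ via a minimal projective presentation of $T$ with a snake lemma argument applied to the approximation short exact sequence.

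First I would fix a minimal projective presentation $P_1 \xrightarrow{d_1} P_0 \to T \to 0$. From $\tau T = \Ker(\nu d_1)$, where $\nu$ is the Nakayama functor, and the natural isomorphism $\Hom_\Lambda(M,\nu P) \cong D\Hom_\Lambda(P,M)$ for projective $P$, one obtains functorial identifications
$$\Ker\bigl[\Hom_\Lambda(d_1,M)\bigr] \cong \Hom_\Lambda(T,M), \qquad \Coker\bigl[\Hom_\Lambda(d_1,M)\bigr] \cong D\Hom_\Lambda(M,\tau T),$$
for $M \in \mod\Lambda$, where the map is $\Hom_\Lambda(d_1,M):\Hom_\Lambda(P_0,M)\to\Hom_\Lambda(P_1,M)$. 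The first identification is immediate from $P_0/\Im d_1 \cong T$; the second follows from $\Hom_\Lambda(M,\tau T) \cong \Ker\bigl[D\Hom_\Lambda(P_1,M)\to D\Hom_\Lambda(P_0,M)\bigr] \cong D\Coker\bigl[\Hom_\Lambda(d_1,M)\bigr]$ together with $D^2 = \mathrm{id}$.

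Next, set $X':=\Im f \in \Fac T$. I would apply the exact functors $\Hom_\Lambda(P_i,-)$ for $i=0,1$ to the short exact sequence $0\to Y\to T_0\to X'\to 0$ and link the resulting short exact rows by $\Hom_\Lambda(d_1,-)$. The snake lemma, together with the identifications above, produces the exact sequence
$$\Hom_\Lambda(T,T_0)\xrightarrow{\pi_*}\Hom_\Lambda(T,X')\xrightarrow{\delta} D\Hom_\Lambda(Y,\tau T)\to D\Hom_\Lambda(T_0,\tau T).$$
Two inputs close the proof. Since $T_0\in\add T$ and $T$ is $\tau$-rigid, $\Hom_\Lambda(T_0,\tau T)=0$. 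The right $\add T$-approximation property of $f$ forces $\pi_*$ to be surjective: any $h:T\to X'$ composed with $X'\hookrightarrow X$ factors through $f:T_0\to X$, and the lift necessarily has image in $X'$, providing a preimage of $h$ under $\pi_*$. By exactness $\delta=0$, so $D\Hom_\Lambda(Y,\tau T)$ embeds into $D\Hom_\Lambda(T_0,\tau T)=0$, whence $\Hom_\Lambda(Y,\tau T)=0$.

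The main care point is the setup of the two identifications of $\Ker$ and $\Coker$ of $\Hom_\Lambda(d_1,-)$; once these are in place, the snake lemma together with $\tau$-rigidity and the approximation property deliver the conclusion essentially automatically.
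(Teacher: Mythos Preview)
Your argument is correct. The paper does not actually prove this lemma; it is quoted without proof from \cite[Lemma~2.6]{AIR}. Your approach---identifying $\Hom_\Lambda(T,-)$ and $D\Hom_\Lambda(-,\tau T)$ as the kernel and cokernel of $\Hom_\Lambda(d_1,-)$ for a minimal projective presentation $P_1\xrightarrow{d_1}P_0\to T\to 0$, and then running the snake lemma on the short exact sequence $0\to Y\to T_0\to \Im f\to 0$---is essentially the standard proof and matches the argument in \cite{AIR}; the two ingredients you single out ($\tau$-rigidity killing the right-hand term and the approximation property making $\pi_*$ surjective) are exactly the ones used there. One minor remark: minimality of the approximation is not actually needed for the vanishing conclusion, only the approximation property itself, and your proof reflects this.
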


We also need the following theorem from \cite[Theorem 2.12]{AIR}.
\begin{theorem}\label{2.c} Let $T\in\mod\Lambda$ be a $\tau$-rigid module. Then the following are equivalent.
\begin{enumerate}[\rm(1)]
\item $T$ is a $\tau$-tilting module.
\item $(\Fac T,\Sub \tau T)$ is a torsion pair.
\end{enumerate}
\end{theorem}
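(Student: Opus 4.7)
For any $\tau$-rigid $T$, I first record $\Hom_\Lambda(\Fac T,\Sub \tau T) = 0$: a map $M \to N$ with $M$ a quotient of $T^m$ and $N \hookrightarrow (\tau T)^n$ embeds into $\Hom_\Lambda(T,\tau T)^{mn}=0$. Next, by the Auslander--Smal\o{} identity $\Ext^1_\Lambda(T, \Fac T) = 0$ (equivalent to $\tau$-rigidity via AR duality), any extension $0 \to A \to B \to C \to 0$ with $A, C \in \Fac T$ admits a lift of any surjection $T^n \twoheadrightarrow C$ to $T^n \to B$; combined with the composite $T^m \twoheadrightarrow A \hookrightarrow B$, the resulting map $T^{n+m}\to B$ is surjective, so $B \in \Fac T$. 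Thus $\Fac T$ is always a torsion class paired with $\mathcal F := \{Y : \Hom_\Lambda(T,Y) = 0\}\supseteq \Sub \tau T$, and the theorem reduces to proving $\mathcal F = \Sub \tau T$ under (1) and $|T|=|\Lambda|$ under (2).

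\textbf{Direction (1) $\Rightarrow$ (2).} Assume $T$ is $\tau$-tilting. To show $\mathcal F \subseteq \Sub \tau T$, for $Y \in \mathcal F$ I would embed $Y$ in its injective envelope $I$, consider $0 \to Y \to I \to I/Y \to 0$, and pick a minimal right $\add T$-approximation $g : T_0 \to I/Y$. Lemma \ref{2.b} yields $\Hom_\Lambda(\Ker g, \tau T) = 0$. A diagram chase combining this with the two exact sequences above and $\Hom_\Lambda(T, Y) = 0$ produces a left $\add \tau T$-approximation $Y \to (\tau T)^n$; the hypothesis $|T| = |\Lambda|$ is precisely what forces this approximation to be \emph{injective}, and this is the principal technical obstacle. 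Without $|T|=|\Lambda|$, $\mathcal F$ can genuinely be strictly larger than $\Sub \tau T$ (as simple examples with $T$ merely $\tau$-rigid show), so the cardinality condition must be invoked here and nowhere else.

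\textbf{Direction (2) $\Rightarrow$ (1).} Assume $(\Fac T, \Sub \tau T)$ is a torsion pair. By the Bongartz-type completion of $\tau$-rigid modules, extend $T$ to a $\tau$-tilting $T \oplus X$; the goal is $X = 0$. Decomposing $X$ in the given torsion pair, $0 \to tX \to X \to X/tX \to 0$ with $X/tX \hookrightarrow (\tau T)^n$, the canonical surjection $X \twoheadrightarrow X/tX$ lies in $\Hom_\Lambda(X, \tau T)^n = 0$ (by $\tau$-rigidity of $T \oplus X$), so $X/tX = 0$ and $X \in \Fac T$. Consequently $\Fac T = \Fac(T\oplus X)$, and a summand-counting argument (both $T$ and $T\oplus X$ are $\tau$-rigid generators of the same torsion class, so their additive closures coincide, while Bongartz complements by construction contain no summand from $\add T$) forces $X = 0$, whence $|T|=|\Lambda|$.
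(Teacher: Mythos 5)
Your common setup is correct, and the reduction is clean: $\Fac T$ is always a torsion class whose torsion-free class is $\mathcal F=\{Y:\Hom_\Lambda(T,Y)=0\}\supseteq\Sub\tau T$, so the theorem amounts to $|T|=|\Lambda|\Leftrightarrow\mathcal F=\Sub\tau T$. But neither direction is then actually proved. In $(1)\Rightarrow(2)$ you stop at the decisive point: the ``diagram chase'' that is supposed to produce a left $\add\tau T$-approximation $Y\to(\tau T)^n$ is never carried out, and you yourself flag the injectivity of that approximation as ``the principal technical obstacle'' without resolving it or explaining by what mechanism $|T|=|\Lambda|$ would force it. That obstacle \emph{is} the entire content of this direction; it is exactly what the paper outsources to \cite[Lemma 2.11]{AIR} (for $\tau$-rigid $T$, $({}^{\bot_0}\tau T,\Sub\tau T)$ is a torsion pair) combined with \cite[Theorem 2.12]{AIR} ($T$ is $\tau$-tilting iff ${}^{\bot_0}\tau T=\Fac T$) --- the paper's proof of both directions is nothing more than the assembly of these two citations. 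Note also that Lemma \ref{2.b} points the wrong way for your plan: it certifies that kernels of $\add T$-approximations lie in ${}^{\bot_0}\tau T$ (the torsion side), whereas you need to embed $Y$ into $(\tau T)^n$ (the torsion-free side), which requires the Auslander--Smal\o{}/AR-formula input in the opposite direction.

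In $(2)\Rightarrow(1)$ the argument through ``$X\in\Fac T$, hence $\Fac T=\Fac(T\oplus X)$'' is fine, but the closing ``summand-counting argument'' rests on a false principle: two $\tau$-rigid generators of the same torsion class need \emph{not} have the same additive closure. Let $\Lambda$ be the path algebra of $1\rightarrow 2$, $M$ the indecomposable projective of length two (top $S_1$, socle $S_2$), $T=M$ and $X=S_1$. Then $T$ is $\tau$-rigid ($\tau M=0$), $T\oplus X=M\oplus S_1$ is $\tau$-tilting, $X\in\Fac T$ and $\Fac T=\Fac(T\oplus X)=\add(M\oplus S_1)$, yet $X\notin\add T$. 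So from the data you list at that point ($T$, $T\oplus X$ both $\tau$-rigid with the same $\Fac$) one cannot conclude $X=0$; hypothesis (2) must be invoked a second time, and indeed it fails in this example ($\tau T=0$ but $\Fac T\neq\mod\Lambda$). A correct ending: (2) forces ${}^{\bot_0}\tau T=\Fac T$ (for $M\in{}^{\bot_0}\tau T$ the torsion-free quotient lies in $\Sub\tau T\cap{}^{\bot_0}\tau T=0$); take a minimal right $\add T$-approximation $T_0\to X$, surjective since $X\in\Fac T$, with kernel $K$; Lemma \ref{2.b} gives $K\in{}^{\bot_0}\tau T=\Fac T$, and $\Ext^1_\Lambda(X,\Fac(T\oplus X))=0$ (Auslander--Smal\o{}, from $\tau$-rigidity of $T\oplus X$) splits $0\to K\to T_0\to X\to 0$, so $X\in\add T$ and $|T|=|T\oplus X|=|\Lambda|$. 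As submitted, then, both directions have genuine gaps: the first is an admitted sketch of precisely the AIR results the paper cites, and the second uses an incorrect lemma at its final step.
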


\begin{proof}$(1)\Rightarrow(2)$ Since $T$ is $\tau$-rigid, one gets a torsion pair $(^{\bot_0}\tau T,\Sub\tau T)$ by \cite[Lemma 2.11]{AIR}, where $^{\bot_0}\tau T$ is the subcategory of $\mod\Lambda$ consisting of modules $M$ satisfying $\Hom_\Lambda(M,\tau T)=0$ . Then by \cite[Theorem 2.12]{AIR}, one gets $^{\bot_0}\tau T=\Fac T$, the assertion holds.

$(2)\Rightarrow(1)$ Since $T$ is $\tau$-rigid, one gets a torsion pair $(^{\bot_0}\tau T,\Sub\tau T)$ by \cite[Lemma 2.11]{AIR}. The fact $(\Fac T,\Sub \tau T)$ is a torsion pair implies that $^{\bot_0}\tau T=\Fac T$. Then by \cite[Theorem 2.12]{AIR}, $T$ is a $\tau$-tilting module.
\end{proof}

The following proposition on $\tau$-tilting modules is important.

\begin{proposition}\label{2.3} Let $T\in\mod \Lambda$ be a $\tau$-tilting module. For any $M\in\Fac T$, there is an exact sequence $\cdots\rightarrow T_1\stackrel{f_1}{\rightarrow}T_0\stackrel{f_0}{\rightarrow}M\rightarrow 0$ with $T_i\in \add T$ and $\Ker f_i\in \Fac T$.
\end{proposition}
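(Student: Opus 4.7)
The plan is to construct the sequence inductively by repeatedly taking minimal right $\add T$-approximations, using Lemma \ref{2.b} and Theorem \ref{2.c} to verify that each syzygy remains in $\Fac T$.

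First I would set $M_0 := M$ and pick a minimal right $\add T$-approximation $f_0 \colon T_0 \to M_0$ (this exists because $T$ has only finitely many non-isomorphic indecomposable summands). Since $M_0 \in \Fac T$, there is a surjection $T^n \twoheadrightarrow M_0$, which must factor through $f_0$ by the approximation property; hence $f_0$ is surjective. Let $M_1 := \Ker f_0$. By Lemma \ref{2.b} applied to the exact sequence $0 \to M_1 \to T_0 \xrightarrow{f_0} M_0 \to 0$, we obtain $\Hom_\Lambda(M_1,\tau T)=0$, i.e. $M_1 \in {}^{\bot_0}\tau T$. Now the proof of Theorem \ref{2.c} records that for a $\tau$-tilting module $T$ one has ${}^{\bot_0}\tau T = \Fac T$, so $M_1 \in \Fac T$.

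Next I would iterate: having produced $M_i \in \Fac T$, take a minimal right $\add T$-approximation $f_i \colon T_i \to M_i$, which is surjective by the same argument, and set $M_{i+1} := \Ker f_i \in \Fac T$ by Lemma \ref{2.b} together with the identification ${}^{\bot_0}\tau T = \Fac T$. Splicing these short exact sequences
\[
0 \to M_{i+1} \to T_i \xrightarrow{f_i} M_i \to 0
\]
together yields the desired exact sequence $\cdots \to T_1 \xrightarrow{f_1} T_0 \xrightarrow{f_0} M \to 0$ with $T_i \in \add T$, where $\Ker f_i = M_{i+1} \in \Fac T$ for every $i$.

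The conceptually delicate step is the inductive passage $M_i \in \Fac T \Rightarrow M_{i+1} \in \Fac T$; this is exactly where the $\tau$-tilting hypothesis is used, through the identification of the torsion class $\Fac T$ with ${}^{\bot_0}\tau T$. Apart from this, the construction is routine: the surjectivity of each $f_i$ is automatic from $M_i \in \Fac T$ and the definition of an $\add T$-approximation, and existence of minimal approximations follows from $\add T$ being functorially finite (as $T$ has only finitely many indecomposable summands). No other results beyond Lemma \ref{2.b} and Theorem \ref{2.c} appear to be needed.
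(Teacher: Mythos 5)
Your proposal is correct and follows essentially the same route as the paper's proof: take a minimal right $\add T$-approximation (surjective since $M\in\Fac T$), apply Lemma \ref{2.b} to get $\Hom_\Lambda(\Ker f_0,\tau T)=0$, identify this with membership in $\Fac T$ via Theorem \ref{2.c}, and iterate. The only cosmetic difference is that you invoke the equality ${}^{\bot_0}\tau T=\Fac T$ explicitly, whereas the paper phrases the same step through the torsion pair $(\Fac T,\Sub\tau T)$.
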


\begin{proof}  Since $M\in\Fac T$, then one gets an epimorphism $g:T'\rightarrow M$ with $T'\in \add T$. Let $T_0\stackrel{f_0}{\rightarrow}M$ be a minimal right $\add T$-approximation of $M$. Then one gets that $f_0$ is an epimorphism. Since $T$ is $\tau$-tilting, by Lemma \ref{2.b} we have $\Hom_{\Lambda}(\Ker f_0, \tau T)=0$. Using the fact $T$ is a $\tau$-tilting module again, we have a torsion pair $(\Fac T, \Sub \tau T)$ by Theorem \ref{2.c}. Then $\Ker f_0\in\Fac T$.  Using a similar argument, one gets the assertion.
\end{proof}

The following theorem is known as the Auslander-Reiten formula \cite[Chapter IV, Theroem 2.13]{AsSS}.

\begin{theorem}\label{2.4} For any $M,N\in \mod\Lambda$,
$\Ext_{\Lambda}^1(M,N)\simeq \mathbb{D}\overline{\Hom}_{\Lambda}(N,\tau M)$ holds.
\end{theorem}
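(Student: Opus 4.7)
My plan is to derive the Auslander--Reiten formula via the Auslander transpose and standard tensor--Hom adjunctions, following the classical approach.

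First, fix a minimal projective presentation $P_{1} \xrightarrow{d} P_{0} \to M \to 0$. Applying $(-)^{*} = \Hom_{\Lambda}(-, \Lambda)$ yields $P_{0}^{*} \xrightarrow{d^{*}} P_{1}^{*} \to \Tr M \to 0$ in $\mod\Lambda^{\op}$, and applying the standard duality $\mathbb{D}$ produces the beginning of a minimal injective copresentation $0 \to \tau M \to \mathbb{D} P_{1}^{*} \to \mathbb{D} P_{0}^{*}$ of $\tau M = \mathbb{D}\Tr M$.

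Second, apply $\Hom_{\Lambda}(N, -)$ to this left exact sequence and invoke the natural isomorphism $\Hom_{\Lambda}(N, \mathbb{D} P_{i}^{*}) \cong \mathbb{D}(N \otimes_{\Lambda} P_{i}^{*}) \cong \mathbb{D}\Hom_{\Lambda}(P_{i}, N)$, where the first step is the tensor--Hom adjunction for $\mathbb{D}$ and the second is the evaluation isomorphism, valid because each $P_{i}$ is finitely generated projective. Dualizing once more via the exact $\mathbb{D}$ yields
$$ \Hom_{\Lambda}(P_{0}, N) \xrightarrow{d^{\sharp}} \Hom_{\Lambda}(P_{1}, N) \to \mathbb{D}\Hom_{\Lambda}(N, \tau M) \to 0, $$
so that $\mathbb{D}\Hom_{\Lambda}(N, \tau M) \cong \Coker d^{\sharp}$.

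Third, I relate $\Coker d^{\sharp}$ to $\Ext_{\Lambda}^{1}(M, N)$. Writing $K = \Im d$, the short exact sequence $0 \to K \to P_{0} \to M \to 0$ gives $\Ext_{\Lambda}^{1}(M, N) = \Coker(\Hom_{\Lambda}(P_{0}, N) \to \Hom_{\Lambda}(K, N))$, and since $\Hom_{\Lambda}(K, N) \hookrightarrow \Hom_{\Lambda}(P_{1}, N)$ (as $P_{1} \twoheadrightarrow K$), a direct diagram chase produces a short exact sequence
$$ 0 \to \Ext_{\Lambda}^{1}(M, N) \to \Coker d^{\sharp} \to \Hom_{\Lambda}(P_{1}, N)/\Hom_{\Lambda}(K, N) \to 0. $$
Applying $\mathbb{D}$ and using the identification above gives
$$ 0 \to \mathbb{D}(\Hom_{\Lambda}(P_{1}, N)/\Hom_{\Lambda}(K, N)) \to \Hom_{\Lambda}(N, \tau M) \to \mathbb{D}\Ext_{\Lambda}^{1}(M, N) \to 0. $$
The main obstacle, and the heart of the argument, is to identify the leftmost term as exactly the submodule of morphisms $N \to \tau M$ that factor through an injective module; this is carried out by tracing the adjunctions above and using that injectives in $\mod\Lambda$ are direct summands of objects of the form $\mathbb{D} P$ for projective $P \in \mod\Lambda^{\op}$, so that any injective-factoring morphism $N \to \tau M$ can be arranged to factor through $\mathbb{D} P_{1}^{*}$. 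Quotienting out this submodule then yields $\overline{\Hom}_{\Lambda}(N, \tau M) \cong \mathbb{D}\Ext_{\Lambda}^{1}(M, N)$, equivalently $\Ext_{\Lambda}^{1}(M, N) \cong \mathbb{D}\overline{\Hom}_{\Lambda}(N, \tau M)$, as desired.
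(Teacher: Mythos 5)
First, a remark on the comparison: the paper does not prove this statement at all --- it quotes the Auslander--Reiten formula directly from \cite[Chapter IV, Theorem 2.13]{AsSS} --- so your proposal has to be measured against the standard textbook argument, which is indeed the route you take. Your first two steps are correct: applying $(-)^{*}$ and then $\mathbb{D}$ to a minimal projective presentation gives $0\to\tau M\to \mathbb{D}P_{1}^{*}\to\mathbb{D}P_{0}^{*}$, the adjunction and evaluation isomorphisms give $\Hom_{\Lambda}(N,\mathbb{D}P_{i}^{*})\cong\mathbb{D}\Hom_{\Lambda}(P_{i},N)$, and hence $\mathbb{D}\Hom_{\Lambda}(N,\tau M)\cong\Coker d^{\sharp}$. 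The short exact sequence $0\to\Ext^{1}_{\Lambda}(M,N)\to\Coker d^{\sharp}\to\Hom_{\Lambda}(P_{1},N)/\Hom_{\Lambda}(K,N)\to 0$ in your third step is also correct, since $\Im d^{\sharp}\subseteq\Hom_{\Lambda}(K,N)\subseteq\Hom_{\Lambda}(P_{1},N)$ and $\Ext^{1}_{\Lambda}(M,N)=\Hom_{\Lambda}(K,N)/\Im d^{\sharp}$.

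The gap is exactly where you say "the heart of the argument" lies, and the one-sentence justification you offer for it does not work. You claim that an injective-factoring morphism $N\to\tau M$ "can be arranged to factor through $\mathbb{D}P_{1}^{*}$", but there is no canonical map $\mathbb{D}P_{1}^{*}\to\tau M$ to factor through --- the canonical map goes the other way, $\tau M\hookrightarrow\mathbb{D}P_{1}^{*}$. If one does trace the reduction carefully, a morphism $N\to\tau M$ factoring through an injective factors through some $\nu Q=\mathbb{D}Q^{*}$ with $Q$ projective, and since $\nu$ is fully faithful on projectives, the maps $\nu Q\to\tau M=\Ker(\nu P_{1}\to\nu P_{0})$ correspond to maps $Q\to\Ker d$; so the relevant injective is $\nu$ of a projective cover of $\Ker d$, not $\nu P_{1}$ (consistently, when $\pd_{\Lambda}M\leq 1$ one has $\Ker d=0$, forcing $\mathcal{I}(N,\tau M)=0$, which matches $\Hom_{\Lambda}(P_{1},N)/\Hom_{\Lambda}(K,N)=0$). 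What actually has to be proved is that, under the composite identification $\Hom_{\Lambda}(N,\tau M)\cong\mathbb{D}\bigl(\Hom_{\Lambda}(P_{1},N)/\Im d^{\sharp}\bigr)$, the functionals vanishing on the larger submodule $\Hom_{\Lambda}(K,N)=\{h\colon P_{1}\to N\mid h|_{\Ker d}=0\}$ are precisely the morphisms factoring through an injective; this requires chasing an explicit element through the evaluation isomorphism $N\otimes_{\Lambda}P_{1}^{*}\cong\Hom_{\Lambda}(P_{1},N)$ and is the genuine content of \cite[Chapter IV, Lemma 2.12 and Theorem 2.13]{AsSS}. Until that identification is carried out, your argument only produces a surjection $\Hom_{\Lambda}(N,\tau M)\twoheadrightarrow\mathbb{D}\Ext^{1}_{\Lambda}(M,N)$ with an unidentified kernel, not the stated formula.
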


\section{Main results}

In this section, we show that a self-orthogonal $\tau$-tilting module of finite projective dimension is a tilting module. In addition, we give an example to show that a self-orthogonal support $\tau$-tilting but not $\tau$-tilting module of finite projective dimension need not be a partial tilting module.

 The following property of $\tau$-tilting modules which is a generalization of that of tilting modules is quite essential in this paper.

\begin{proposition}\label{2.5} Let $T\in\mod\Lambda$ be a $\tau$-tilting module and let $M\in \mod\Lambda$ satisfy $\Ext_{\Lambda}^1(T,M)=0$. Then $\Hom_{\Lambda}(T,M)=0$ holds if and only if $M=0$.
\end{proposition}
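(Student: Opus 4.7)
The ``if'' direction is immediate. For the converse, assume $T$ is $\tau$-tilting and $\Ext^{1}_\Lambda(T,M)=0$; the goal is to deduce $M=0$ from $\Hom_\Lambda(T,M)=0$. The plan is to first locate $M$ inside some $\tau T'$ with $T'\in\add T$ using the torsion pair, then factor this embedding through an injective via the Auslander--Reiten formula, and finally push everything up to the injective envelope of $M$ to contradict the standard fact that $\tau N$ has no injective summands.

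By Theorem~\ref{2.c}, $(\Fac T,\Sub\tau T)$ is a torsion pair. Any $X\in\Fac T$ admits an epimorphism $T^{k}\twoheadrightarrow X$, so any morphism $X\to M$ composed with this surjection gives $T^{k}\to M$, which vanishes by hypothesis; hence $X\to M=0$. In particular the torsion part of $M$ is zero and $M\in\Sub\tau T$, so I fix an embedding $\iota:M\hookrightarrow\tau T'$ with $T'\in\add T$. Now the Auslander--Reiten formula (Theorem~\ref{2.4}) translates $\Ext^{1}_\Lambda(T,M)=0$ into $\overline{\Hom}_\Lambda(M,\tau T)=0$: every morphism $M\to\tau T$ factors through an injective $\Lambda$-module, and by additivity the same holds for $\iota$. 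So there are an injective $I$ and maps $\alpha:M\to I$, $\beta:I\to\tau T'$ with $\iota=\beta\alpha$; since $\iota$ is injective, so is $\alpha$.

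The crux is to extend this configuration to the injective envelope $E(M)$ of $M$. Because $I$ is injective, $\alpha$ extends to a map $\tilde\alpha:E(M)\to I$, whose kernel meets $M$ trivially (as $\tilde\alpha|_{M}=\alpha$ is injective) and hence vanishes by the essentiality of $M\subseteq E(M)$. The same essentiality argument applied to the composite $\beta\tilde\alpha:E(M)\to\tau T'$, whose restriction to $M$ is $\iota$, shows $\beta\tilde\alpha$ is injective, so $E(M)$ embeds in $\tau T'$. Because $E(M)$ is injective, this embedding splits, exhibiting $E(M)$ as an injective direct summand of $\tau T'$; but $\tau N$ has no injective summands for any $N$, so $E(M)=0$ and $M=0$. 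The main delicate point is this extension to $E(M)$, where both the existence of $\tilde\alpha$ and the injectivity of $\beta\tilde\alpha$ depend on the essential character of the inclusion $M\hookrightarrow E(M)$.
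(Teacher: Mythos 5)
Your argument is correct and is essentially the paper's own proof: use the torsion pair to place $M$ in $\Sub\tau T$, apply the Auslander--Reiten formula to factor the embedding through an injective, pass to the injective envelope by essentiality, and conclude that $E(M)$ would be an injective summand of $\tau T'$, which is impossible. The only difference is cosmetic: you spell out the extension to $E(M)$ and the reason $\tau T'$ has no injective summands, which the paper leaves implicit.
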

\begin{proof} $\Rightarrow$ On the contrary, suppose that $M\not=0$. Then the injective envelope $I^0(M)$ is not $0$. Since $T$ is a $\tau$-tilting module, one gets a torsion pair $({\rm Fac}T, {\rm Sub}\tau T)$ by Theorem \ref{2.c}. Then $\Hom_{\Lambda}(T,M)=0$ implies that $M\in {\rm Sub}\tau T$. That is, $M\rightarrowtail (\tau T)^m$ for some integer $m\geq 1$. Since $\Ext_{\Lambda}^{1}(T,M)=0$, by Theorem \ref{2.4}, one gets $\overline{\Hom}_{\Lambda}(M,\tau T)=0$ which implies that the monomorphism $f:M\rightarrowtail (\tau T)^m$ factors through an injective module, and hence factors through the injective envelope $h:M\hookrightarrow I^0(M)$, that is, $f=gh$ for some $g: I^0(M)\rightarrow (\tau T)^m$. Since $h$ is an essential monomorphism, one gets that $g$ is a monomorphism, and hence $I^0(M)$ is a direct summand of $\tau T$. This is a contradiction.

$\Leftarrow$ This is obvious.
\end{proof}

For a module $T\in\mod\Lambda$, denote by $T^{\bot}=\{M\in\mod\Lambda|\Ext_\Lambda^i(T,M)=0,\forall\  i\geq1\}$. Now we show the relations between self-orthogonal $\tau$-tilting modules and tilting modules.

\begin{theorem}\label{2.6} Let $T\in\mod\Lambda$ be a $\tau$-tilting module. Then $T$ is a tilting module if and only if $\Ext_{\Lambda}^{i}(T,\Fac T)=0$ for all $i\geq 1$.
\end{theorem}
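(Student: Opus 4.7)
The forward implication is routine: if $T$ is tilting, then $\pd_\Lambda T\leq 1$ immediately gives $\Ext_\Lambda^i(T,-)=0$ for all $i\geq 2$, while for $M\in\Fac T$ any presentation $0\to K\to T^n\to M\to 0$, once fed into $\Hom_\Lambda(T,-)$, produces $\Ext_\Lambda^1(T,M)=0$ via $\Ext_\Lambda^1(T,T^n)=0$ and $\Ext_\Lambda^2(T,K)=0$.

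For the converse the plan is to show that $\mathbb{D}\Lambda\in\Fac T$; by \cite[Proposition 2.2(b)]{AIR} this suffices, since it will exhibit $T$ as a faithful $\tau$-tilting module and hence as a tilting module. I would apply the torsion pair $(\Fac T,\Sub\tau T)$ of Theorem \ref{2.c} to $\mathbb{D}\Lambda$ to obtain a short exact sequence
$$0\to M\to \mathbb{D}\Lambda\to N\to 0$$
with $M\in\Fac T$ and $N\in\Sub\tau T$. The inclusion $N\hookrightarrow (\tau T)^m$ combined with the $\tau$-rigidity of $T$ gives $\Hom_\Lambda(T,N)=0$. The long exact sequence attached to $\Hom_\Lambda(T,-)$, the hypothesis $\Ext_\Lambda^1(T,M)=0$, and the identity
$$\Ext_\Lambda^1(T,\mathbb{D}\Lambda)\cong \mathbb{D}\Tor_1^\Lambda(T,\Lambda)=0$$
(valid because $\Lambda$ is projective as a left $\Lambda$-module) then pin $\Ext_\Lambda^1(T,N)$ between two zeros, so that Proposition \ref{2.5} forces $N=0$.

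With $\mathbb{D}\Lambda\in\Fac T$ in hand, any epimorphism $T^n\twoheadrightarrow \mathbb{D}\Lambda$ dualises to a monomorphism $\Lambda\hookrightarrow (\mathbb{D}T)^n$ of left $\Lambda$-modules; since the left annihilator of $\mathbb{D}T$ coincides with the right annihilator of $T$, this exhibits $T$ as a faithful module, and \cite[Proposition 2.2(b)]{AIR} concludes that $T$ is tilting. The main subtlety I foresee is the Ext-vanishing step for $N$: bracketing $\Ext_\Lambda^1(T,N)$ between two zeros in the long exact sequence requires \emph{both} the hypothesis $\Ext^1_\Lambda(T,\Fac T)=0$ (used on $M$) and the projectivity of $\Lambda$ (used on $\mathbb{D}\Lambda$), with neither assumption alone being sufficient.
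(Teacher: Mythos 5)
Your proof is correct, but it takes a genuinely different route from the paper's. The paper argues via the characterization ``$T$ is tilting iff $\Fac T=T^{\bot}$'': it fixes an arbitrary $M\in T^{\bot}$ and shows $M\in\Fac T$ by induction on composition length, applying the canonical sequence of the torsion pair $(\Fac T,\Sub\tau T)$ to $M$ itself and using Proposition \ref{2.5} to kill the torsion-free part. You instead apply that torsion pair to the single module $\mathbb{D}\Lambda$, deduce $\mathbb{D}\Lambda\in\Fac T$, and conclude via faithfulness and \cite[Proposition 2.2(b)]{AIR}; this avoids the induction entirely and in fact shows that only $\Ext^{i}_{\Lambda}(T,\Fac T)=0$ for $i\le 2$ is needed, whereas the paper's argument yields the stronger conclusion $T^{\bot}=\Fac T$ en route. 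Two small corrections to your bookkeeping: in the long exact sequence the terms flanking $\Ext^{1}_{\Lambda}(T,N)$ are $\Ext^{1}_{\Lambda}(T,\mathbb{D}\Lambda)$ and $\Ext^{2}_{\Lambda}(T,M)$, so the hypothesis is invoked on $M$ in degree $2$, not degree $1$ as you state in your closing remark (degree $1$ vanishing on $\Fac T$ is automatic for $\tau$-rigid modules by Auslander--Smal{\o} and would not suffice); and $\Ext^{1}_{\Lambda}(T,\mathbb{D}\Lambda)=0$ holds simply because $\mathbb{D}\Lambda$ is injective, with no need for the Tor identity. Both of these are covered by your hypotheses, so the argument stands.
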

\begin{proof}$\Rightarrow$ This is obvious since $\Fac T=T^{\bot}$ whenever $T$ is a tilting module.

$\Leftarrow$ Recall that $T$ is tilting if and only if $\Fac T=T^{\bot}$. So we only need to show that $T^{\bot}\subseteq \Fac T$, that is, for any $0\not=M\in T^{\bot}$, we have $M\in \Fac T$.

We use induction on the length $l(M)$ of $M$.

If $l(M)=1$, then $M$ is simple. $M\in T^{\bot}$ implies that $\Hom_{\Lambda}(T,M)\not=0$ by Proposition \ref{2.5}. Then one gets that $M\in\Fac T$.

Assume that the assertion holds for $l(M)\leq k$. Now we show the case $l(M)=k+1$. Since $T$ is $\tau$-tilting, by Theorem \ref{2.c} we get the following canonical exact sequence given by the torsion pair ($\Fac T, \Sub\tau T$):
$$0\rightarrow T(M)\rightarrow M\rightarrow F(M)\rightarrow0\ \ \ \ \ \ (*)$$
with $T(M)\in \Fac T$ and $F(M)\in \Sub\tau T$. By the assumption $\Fac T\subseteq T^{\bot}$, we get $\Ext_{\Lambda}^{i}(T,T(M))=0$ for all $i\geq 1$.

We claim that $T(M)\neq 0$. If $T(M)=0$, then $M\simeq F(M)$. So $M\in T^{\bot}$ implies $F(M)\in T^{\bot}$. Since $\Hom_{\Lambda}(T,F(M))=0$, by Proposition \ref{2.5} one gets that $F(M)=0$ which implies $M=0$. This is a contradiction.

 Applying the functor $\Hom_{\Lambda}(T,-)$ to the exact sequence $(*)$ above, one gets that $\Ext_{\Lambda}^{i}(T,F(M))=0$ for all $i\geq 1$. Since $l(F(M))<l(M)=k+1$, we get $F(M)\in \Fac T$ by induction hypothesis. Then $F(M)\in\Sub\tau T$ implies that $F(M)=0$ and hence $M=T(M)$.
\end{proof}

Now we show the proof of Theorem \ref{0.1}.

\begin{proof} By using a result of Auslander and Smal$\o$, a module $T\in\mod\Lambda$ satisfying $\Ext_{\Lambda}^{i}(T,\Fac T)=0$ for all $i\geq 1$ is always $\tau$-rigid. Then by Theorem \ref{2.6}, one gets that $T$ is tilting if and only if $|T|=|\Lambda|$ and $\Ext_{\Lambda}^{i}(T,\Fac T)=0$ for all $i\geq 1$.
\end{proof}

To show the main result on $\tau$-tilting modules of finite projective dimension, we need the following lemma.

\begin{lemma}\label{2.7} Let $T\in\mod\Lambda$ be a $\tau$-tilting module with $\pd_{\Lambda}T=d<\infty$. If $\Ext_{\Lambda}^{i}(T,T)=0$ holds for all $i\geq 1$, then $\Fac T\subseteq T^{\bot}$.
\end{lemma}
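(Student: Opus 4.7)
The plan is to use Proposition \ref{2.3}, which supplies, for any $M\in\Fac T$, a long exact sequence
$$\cdots\to T_1\xrightarrow{f_1}T_0\xrightarrow{f_0}M\to 0$$
with each $T_i\in\add T$ and each $K_i:=\Ker f_i\in\Fac T$. Setting $K_{-1}:=M$, I would split this resolution into short exact sequences
$$0\to K_i\to T_i\to K_{i-1}\to 0$$
for $i\geq 0$, and apply $\Hom_{\Lambda}(T,-)$ to each one.

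The resulting long exact sequences read
$$\cdots\to\Ext_\Lambda^{j}(T,T_i)\to\Ext_\Lambda^{j}(T,K_{i-1})\to\Ext_\Lambda^{j+1}(T,K_i)\to\Ext_\Lambda^{j+1}(T,T_i)\to\cdots$$
Since $T_i\in\add T$ and $\Ext_\Lambda^{k}(T,T)=0$ for all $k\geq 1$ by hypothesis, both outer terms vanish whenever $j\geq 1$. This gives the dimension-shifting isomorphism
$$\Ext_\Lambda^{j}(T,K_{i-1})\simeq\Ext_\Lambda^{j+1}(T,K_i)\qquad(j\geq 1,\; i\geq 0).$$

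Iterating $d$ times, for every $i\geq 1$ I obtain
$$\Ext_\Lambda^{i}(T,M)=\Ext_\Lambda^{i}(T,K_{-1})\simeq\Ext_\Lambda^{i+1}(T,K_0)\simeq\cdots\simeq\Ext_\Lambda^{i+d}(T,K_{d-1}).$$
Since $\pd_\Lambda T=d$, we have $\Ext_\Lambda^{k}(T,-)=0$ for all $k\geq d+1$, and $i+d\geq d+1$. Hence $\Ext_\Lambda^{i}(T,M)=0$ for every $i\geq 1$, which shows $M\in T^{\bot}$ and therefore $\Fac T\subseteq T^{\bot}$.

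There is no serious obstacle: the whole argument is a standard dimension shift, and the two non-trivial inputs (the $\add T$-resolution with kernels in $\Fac T$, and the self-orthogonality that kills the Ext contributions from $T_i$) are already available. The role of the hypothesis $\pd_\Lambda T<\infty$ is exactly to terminate the iteration, guaranteeing that sufficiently high Ext groups vanish universally.
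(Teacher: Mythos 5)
Your proof is correct and follows essentially the same route as the paper: both take the $\add T$-resolution with kernels in $\Fac T$ from Proposition \ref{2.3}, dimension-shift using $\Ext_\Lambda^{k}(T,T)=0$ to get $\Ext_\Lambda^{i}(T,M)\simeq\Ext_\Lambda^{i+d}(T,\Ker f_{d-1})$, and kill the latter using $\pd_\Lambda T=d$. Your version merely writes out the short exact sequences and the iteration more explicitly than the paper does.
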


\begin{proof} For any $M\in \Fac T$, by Proposition \ref{2.3}, we get a long exact sequence

$$\cdots\rightarrow T_1\stackrel{f_1}{\rightarrow} T_0\stackrel{f_0}{\rightarrow} M\rightarrow0\ \ \ \ \ \ \ (1)$$ with $T_i\in \add T$ and $\Ker f_i\in \Fac T$. Consider the sequence: $$0\rightarrow \Ker f_{d-1}\rightarrow T_{d-1}\stackrel{f_{d-1}}{\rightarrow}T_{d-1}\stackrel{f_{d-2}}{\rightarrow}\cdots\rightarrow T_0\stackrel{f_{0}}{\rightarrow}M\rightarrow 0\ \ (2)$$ Since all $T_i\in T^{\bot}$, we have by dimension shifting that
$$\Ext^i_{\Lambda}(T, M)\simeq\Ext^{i+d}_{\Lambda}(T, \Ker f_{d-1}) =0$$ for all $i\geq1$.  We are done.
\end{proof}

Now we are in a position to show the following result.

\begin{theorem}\label{2.8} Let $\Lambda$ be an artin algebra and $T\in\mod\Lambda$ a $\tau$-tilting module of finite projective dimension. Then $T$ is a tilting module if and only if ${\rm Ext}_{\Lambda}^{i}(T,T)=0$ for all $i\geq1$.
\end{theorem}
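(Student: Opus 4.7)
The plan is to derive Theorem~\ref{2.8} as a short consequence of Theorem~\ref{2.6} and Lemma~\ref{2.7}, both of which are already in hand. The forward implication is a triviality: if $T$ is a tilting module, then condition~(2) of Definition~\ref{2.1} gives $\Ext_\Lambda^i(T,T)=0$ for all $i\geq 1$ by fiat, so no work is required.

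For the reverse implication, I would assume that $T$ is a $\tau$-tilting module of finite projective dimension satisfying $\Ext_\Lambda^i(T,T)=0$ for all $i\geq 1$. The key bootstrap step is provided by Lemma~\ref{2.7}, which upgrades this self-orthogonality to the stronger statement $\Fac T\subseteq T^{\bot}$, i.e., $\Ext_\Lambda^i(T,\Fac T)=0$ for all $i\geq 1$. With that hypothesis in place, Theorem~\ref{2.6} applied to the $\tau$-tilting module $T$ delivers at once that $T$ is a tilting module. Thus the argument is just the two-line composition ``Lemma~\ref{2.7} then Theorem~\ref{2.6}''.

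The only point worth flagging is exactly where the finite projective dimension hypothesis is being used: it sits inside Lemma~\ref{2.7}, where the $\Fac T$-resolution supplied by Proposition~\ref{2.3} is truncated after $d=\pd_\Lambda T$ steps and a dimension-shift then converts self-orthogonality of $T$ into vanishing of $\Ext_\Lambda^i(T,M)$ for all $M\in\Fac T$. Without the finiteness of $\pd_\Lambda T$ this bootstrap from $T$ to $\Fac T$ would fail. Consequently I do not anticipate any real obstacle inside the proof of Theorem~\ref{2.8} itself; the substantive content lives upstream, in Lemma~\ref{2.7} (for the $T\leadsto\Fac T$ upgrade) and in Theorem~\ref{2.6} (for the $\Fac T$-vanishing~$\Rightarrow$~tilting passage, which in turn relies on Proposition~\ref{2.5} and the torsion pair from Theorem~\ref{2.c}).
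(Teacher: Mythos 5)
Your proposal is correct and coincides with the paper's own argument: the forward direction is immediate from the definition of a tilting module, and the reverse direction is exactly the two-step composition of Lemma~\ref{2.7} (to upgrade $\Ext_\Lambda^i(T,T)=0$ to $\Fac T\subseteq T^{\bot}$ using $\pd_\Lambda T<\infty$) followed by Theorem~\ref{2.6}. Your remark locating the use of the finite projective dimension hypothesis inside Lemma~\ref{2.7} is also accurate.
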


\begin{proof}$\Rightarrow$  This is obvious.

$\Leftarrow$ By Lemma \ref{2.7}, $\Fac T\subseteq T^{\bot}$. Then the assertion follows from Theorem \ref{2.6}.
\end{proof}

We should mention that the statement of Theorem \ref{0.1} may fail over rings which are not artin algebras. The following example was due to
Angeleri H\"{u}gel and Hrbek \cite[Example 5.4]{AnH}.

\begin{example}\label{2.14} Let $R$ be a commutative local ring with a non-zero idempotent maximal ideal $m$. Let $T=R/m$ be the simple $R$-module.
Then $T$ satisfies that $\Ext^{i}_{R}(T,\Fac T)=0$ for all $i\geq 1$ and has the right number of summands since $R$ is local. But $T$ is not a tilting module.
\end{example}

We give the following example to show that a self-orthogonal support $\tau$-tilting but not $\tau$-tilting module need not be a partial tilting module.

\begin{example}\label{2.13} Let $\Lambda$ be the finite dimensional algebra given by the quiver $Q: 1\stackrel{a_1}{\rightarrow} 2\stackrel{a_2}{\rightarrow} 3$ with the relation $a_1a_2=0$.
Then $S(1)=1$ is an injective support $\tau$-tilting module with projective dimension $2$ and hence not a partial tilting module.
\end{example}

Recall that Tachikawa \cite{T} conjectured that over a self-injective algebra $\Lambda$, every self-orthogonal module $M\in\mod\Lambda$ is projective. When $\Lambda$ is self-injective, Tachikawa's conjecture coincides with the Generalized Nakayama Conjecture of Auslander and Reiten \cite{AuR} which states that a self-orthogonal generator is projective. We end the paper with the following question.

\begin{question}\label{2.10} Let $\Lambda$ be a self-injective algebra and $T \in\mod\Lambda$. If $T$ is a self-orthogonal $\tau$-tilting module, is $T$ a tilting module?
\end{question}

A positive answer to the question will be a partial answer to Tachikawa's conjecture as well as a partial answer to the Generalized Nakayama Conjecture. While a negative answer will give a counter-example to both conjectures.

\vspace{0.2cm}

{\bf Acknowledgement} The author would like to thank Xiao-Wu Chen, Zhaoyong Huang, Osamu Iyama, Yu Zhou and Bin Zhu for useful suggestions to improve the paper. He also wants to thank the referee for his/her useful suggestions to improve this paper.


\begin{thebibliography}{101}






\bibitem[AIR]{AIR} T. Adachi, O. Iyama and I. Reiten, $\tau$-tilting theory, Compos. Math., 150(3)(2014), 415-452.

\bibitem [AnH]{AnH} L. Angeleri H\"{u}gel and M. Hrbek,  Silting
modules over commutative rings, Int. Math. Res. Not. IMRN, 13(2017), 4131-4151.


\bibitem[AsSS]{AsSS} I. Assem, D. Simson and A. Skowro\'nski,
{\it Elements of the Representation Theory of Associative Algebras.
Vol. 1. Techniques of Reperesentation Theory,} London Math. Soc.
Student Texts, {65}, Cambridge Univ. Press, Cambridge, 2006.


\bibitem[AuR]{AuR} M. Auslander and I. Reiten, On a generalized version of the Nakayama conjecture, Proc. Amer. Math. Soc., 52 (1975), 69-74.

\bibitem [AuS]{AuS} M. Auslander and S. O. Smal$\o$, Almost split sequences in subcategories, J. Algebra, 69(1981),
426-454. Addendum: J. Algebra 71 (1981), 592-594.





\bibitem [BZ1]{BZ1} A. Buan and Y. Zhou, A silting theorem, J. Pure Appl. Algebra, 220(7)(2016), 2748-2770.
\bibitem [BZ2]{BZ2} A. Buan and Y. Zhou, Silted algebras, Adv. Math., 303(2016), 2748-2770.
\bibitem [BZ3]{BZ3} A. Buan and Y. Zhou, Endomorphism algebras of
2-term silting complexes, Algebr. Represent. Theory., 21(1)(2018), 181-194.










\bibitem[HaR]{HaR} D. Happel and C. M. Ringel, Tilted algebras, Trans. Amer. Math. Soc., 274(1982), 399-443.

\bibitem[Hu]{Hu} D. Huybrechts, Derived and abelian equivalence of $K_3$ surfaces, J. Algebraic Geom., 17(2)(2008),
375-400.









\bibitem[I]{I} O. Iyama, Tilting Cohen-Macaulay representations, Proceedings of the International Congress of Mathematicians--Rio de Janeiro 2018. Vol. II. Invited lectures, 125-162, World Sci. Publ., Hackensack, NJ, 2018.

\bibitem[IZ1]{IZ1} O. Iyama and X. Zhang, Tilting modules over
Auslander-Gorenstein algebras, Pacific J. Math., 298(2)(2019), 399-416.

\bibitem[IZ2]{IZ2} O. Iyama and X. Zhang, Classifying $\tau$-tilting modules over the Auslander algebra of $K[x]/(x^n)$, J. Math. Soc. Japan, 72(3)(2020), 731-764.






\bibitem[Mi]{Mi} Y. Miyashita, Tilting modules of finite projective
dimension, Math. Zeit., 193(1986), 113-146.



\bibitem[S]{S} S. O. Smal$\o$, Torsion theory and tilting modules, Bull. London Math. Soc., 16(1984), 518-522.
\bibitem[T]{T} H. Tachikawa, Quasi-Frobenius rings and generalizations, LNM 351, Berlin-Heidelberg-
New York, 1973.
\bibitem[XZZ]{XZZ} Z. Xie, L. Zan and X. Zhang, Three results for $\tau$-rigid modules, Rocky Mountain J. Math., 49(8)(2019), 2791-2807.
\bibitem[Z]{Z} S. Zito, $\tau$-tilting finite cluster-tilted algebras, Proc. Edinb. Math. Soc., 63(4)(2020), 950-955.
\end{thebibliography}
\end{document}